 \numberwithin{equation}{section}
\theoremstyle{nonumberplain}  
\newtheorem{proof}{Proof} 
\theoremstyle{plain}  
\newtheorem{proposition}{Proposition}[section]  
\newtheorem{corollary}[proposition]{Corollary}  
\newtheorem{lemma}[proposition]{Lemma}  
\newtheorem{theorem}[proposition]{Theorem}   
\newtheorem{remark}[proposition]{Remark}
\newtheorem{definition}[proposition]{Definition}
\theoremstyle{nonumberplain}
\newcommand{\N}{\mathbb{N}}
\newcommand{\Z}{\mathbb{Z}}
\newcommand{\Chen}{\mathrm{Chen}}
\newcommand{\C}{\mathbb{C}}
\newcommand{\dd}{d}
\newcommand{\End}{\mathrm{End}}
\newcommand{\Hom}{\mathrm{Hom}}
\newcommand{\dom}{\mathrm{dom}}
\renewcommand{\tilde}{\widetilde}
\renewcommand{\hat}{\widehat}
\newcommand{\T}{\mathbb{T}}
\renewcommand{\d}{d}
\title{A Chern-Simons transgression formula for supersymmetric path integrals on spin manifolds }
\author{Sebastian Boldt \footnote{Mathematisches Institut, Universit\"at Leipzig, 04081 Leipzig, Germany. E-mail: boldt@math.uni-leipzig.de}, ~Sergio~Luigi~Cacciatori\footnote{DiSAT, Universit\`a dell'Insubria, Via Valleggio 11, I-22100 Como, Italy and INFN, sezione di Milano, Via Celoria 16, I-20133, Milano, Italy. E-mail: sergio.cacciatori@uninsubria.it},~and Batu G\"uneysu\footnote{Technische Universit\"at Chemnitz, Fakult\"at für Mathematik, 09107 Chemnitz. E-mail: batu.gueneysu@math.tu-chemnitz.de}}
\begin{document}

\maketitle

\begin{abstract} 
Earlier results show that the $N=1/2$ supersymmetric path integral $\mathfrak{J}^g$ on a closed even dimensional Riemannian spin manifold $(X,g)$ can be constructed in a mathematically rigorous way via Chen differential forms and techniques from noncommutative geometry, if one considers $\mathfrak{J}^g$ as a current on the loop space $LX$, that is, as a linear form on differential forms on $LX$. This construction admits a Duistermaat-Heckman localization formula. In this note, fixing a topological spin structure on $X$, we prove that any smooth family $g_\bullet=(g_t)_{t\in [0,1]}$ of Riemannian metrics on $X$ canonically induces a Chern-Simons current $\mathfrak{C}^{g_\bullet}$ which fits into a transgression formula for the supersymmetric path integral. In particular, this result entails that the supersymmetric path integral induces a differential topological invariant on $X$, which essentially stems from the $\hat{A}$-genus of $X$.  
 \end{abstract}

\section{Motivation}

Let $X$ be a compact even dimensional topological spin manifold\footnote{We work exclusively in the category of smooth manifolds without boundary.}. The fixed topological spin structure induces an orientation (cf. Corollary E in \cite{waldorf}) on the Fr\'echet 
manifold $LX$ of smooth loops $\gamma:\T\coloneqq S^1\to X$, whose tangent space $T_\gamma LX$ at a fixed loop $\gamma\in LX$ is given by the space of vector fields on $X$ along $\gamma$, that is, smooth maps $A:\T\to TX$ with $\dot{\gamma}(s)\in T_{\gamma(t)} X$ 
for all $s\in\T$. Given a Riemannian metric $g$ on $X$ let $E^g\in C^{\infty}(LX)$ and $\omega^g\in \Omega^2(LX)$ denote the energy functional and, respectively, the presymplectic form  

\begin{equation}\label{eqn:energy-presymplectic-form}
	E^g_{\gamma}:=(1/2)\int_{\mathbb{T}} g(\dot{\gamma},\dot{\gamma}),\quad  \omega^g_\gamma(A,B):=\int_{\mathbb{T}} g(\nabla_{\dot{\gamma}}A,B),
\end{equation}
where we will occasionally identify $\T=[0,1]/\sim$. The following $N=1/2$ supersymmetric path integral plays a crucial role in the context of Duistermaat-Heckman localization on $LX$: with 
$$
\widehat{\Omega}(LX):=\prod^{\infty}_{j=0}\Omega^j(LX)
$$
the space of smooth differential forms on $LX$, one formally sets
\begin{align}\label{dreckig}
\mathfrak{J}^g:\widehat{\Omega}(LX)\longrightarrow \C,\quad \mathfrak{J}^g[\sigma]:= \int_{LX} e^{-E^g-\omega^g}\wedge \sigma.
\end{align}

Note that even though $LX$ is oriented, as it stands, the definition of $\mathfrak{I}^g$ does not make sense for (at least) the following reasons:
\begin{itemize}
\item there exists no infinite dimensional Lebesgue measure; 
\item the integral of an inhomogeneous differential form (which are the ones of interest) should by definition be the integral of its top degree part, however, $LX$ is infinite dimensional;
\item $LX$ is noncompact, so even if one finds a natural way to integrate differential forms on $LX$, some care has to be taken concerning the question of finding a class of 'integrable' (smooth) differential forms.
\end{itemize}
As we are going to explain in a moment, the mathematical solution of these problems is tied together and manifests itself in a construction of $\mathfrak{J}^g$ via Chen integrals and the differential graded Chern character on $(X,g)$. However, in order to motivate our main results, let us continue with our heuristic observations for the moment.\\
With $\iota$ the contraction by the vector field $K$ on $LX$ given by $\gamma\mapsto \dot{\gamma}$, which generates the natural $\T$-action on $LX$ given by rotating loops, and 
$$
\widehat{\Omega}_\T(LX):=\{\sigma\in \widehat{\Omega}(LX): \mathcal{L}_{K}\sigma=0 \}
$$
the space of $\T$-invariant differential forms, there is a supercomplex
\begin{align}\label{compl}
\cdots \xrightarrow{d-\iota} \widehat{\Omega}^{+}_\T(LX)\xrightarrow{d-\iota}\widehat{\Omega}^{-}_\T(LX)\xrightarrow{d-\iota}\widehat{\Omega}^{+}_\T(LX)\xrightarrow{d-\iota}\cdots, 
\end{align}
and (with a slight abuse of notation) the dual supercomplex 
\begin{align}\label{compldual}
\cdots \xrightarrow{d-\iota} \widehat{\Omega}_{+}^\T(LX)\xrightarrow{d-\iota}\widehat{\Omega}_{-}^\T(LX)\xrightarrow{d-\iota}\widehat{\Omega}_{+}^\T(LX)\xrightarrow{d-\iota}\cdots\,,
\end{align}
i.e. $\widehat{\Omega}_{\pm}^\T(LX)$ stands for the linear forms on $\widehat{\Omega}^{\pm}_\T(LX)$ and $d-\iota$ acts dually.

Note that these complexes are actually well-defined within the differential calculus of Fr\'echet manifolds. Now, supersymmetry takes the form $(d-\iota)\mathfrak{J}^g=0$. Moreover, $\mathfrak{J}^g$ is an even current, as $LX$ is formally even-dimensional, so that $\mathfrak{J}^g$ determines an even homology class in the homology of (\ref{compldual}). Finally, one can derive the following infinite dimensional analogue of 
the Duistermaat-Heckman localization formula,
\begin{align*}\label{dh}
\mathfrak{J}^g[\sigma]= \int_X \hat{A}(X,g)\wedge \sigma|_{X}\quad\text{for all $\sigma\in\widehat{\Omega}(LX)$ with $(d-\iota)\sigma=0$,}
\end{align*}
where $\widehat{A}(X,g)$ is the Chern-Weil representative of the $\widehat{A}$-genus of $X$. This leads to a simple and differential geometric 'proof' of the Atiyah-Singer index theorem \cite{bismut, atiyah, alvarez}, which was in fact, the main motivation that lead to the discovery of $\mathfrak{J}^g$.\vspace{1mm}

\emph{The aim of this paper is to examine the dependence of $\mathfrak{J}^g$ on $g$.} To this end, let $g_\bullet=(g_t)_{t\in [0,1]}$ be a smooth family of Riemannian metrics on $X$ and define for every fixed $t\in [0,1]$ a  differential form 
$$
\beta^{g_\bullet}_t\in \Omega^1(LX),\quad\beta^{g_\bullet}_{t,\gamma}(A):=\frac{1}{2}\int_\T (dg_t/dt)(\dot{\gamma},A), 
$$
and the induced odd current 
$$
\mathfrak{C}^{g_\bullet}_t:\widehat{\Omega}(LX)\longrightarrow\C,\quad \mathfrak{C}^{g_\bullet}_t(\sigma):=\mathfrak{I}^{g_t}( \beta^{g_\bullet}_t\wedge \sigma).
$$
In the appendix, we are going to derive the formula 
\begin{align}\label{theoformel}
(d/dt) \mathfrak{J}^{g_t}= (d-\iota)\mathfrak{C}^{g_\bullet}_t\quad\text{for all $t\in [0,1]$.}
\end{align}
This equality has an important consequence: defining the (odd) Chern-Simons current $\mathfrak{C}^{g_\bullet}$ by
$$
\mathfrak{C}^{g_\bullet}:=\int^1_0 \mathfrak{C}^{g_\bullet}_t dt:\widehat{\Omega}(LX)\longrightarrow\C,
$$
one gets the transgression formula 
$$
\mathfrak{J}^{g_1}-\mathfrak{J}^{g_0}=(d-\iota)\mathfrak{C}^{g_\bullet}.
$$ 
These heuristic observations dictate that any mathematically rigorous definition of $\mathfrak{J}^{g}$ should admit a Chern-Simons type transgression formula, and that the homology class induced by $\mathfrak{J}^g$ in the homology of (\ref{compldual}) should not depend on a particular choice of a Riemannian metric $g$ on $X$. Let us denote this homology class with $\mathfrak{J}$. Using Stokes formula it is easy to check that the current
$$
\underline{\hat{A}}(X,g):\Omega(LX)\longrightarrow \C,\quad \sigma\longmapsto \int_X \hat{A}(X,g)\wedge \sigma|_{X},
$$
satisfies $(d-\iota)\underline{\hat{A}}(X,g)=0$, and by a standard transgression argument one finds that the induced homology class does not depend on $g$. In fact, the Duistermaat-Heckman formula dictates that this homology class $\underline{\hat{A}}(X)$ should be equal to $\mathfrak{J}$.

\section{Main results}

Let us explain now how these heuristic considerations can be verified in a mathematically rigorous way. To this end, we first explain the natural class of (smooth) integrable differential forms on $LX$: we turn $\widehat{\Omega}(LX)$ into a complete locally 
convex Hausdorff space by equipping $\Omega^j(LX)$ with the family of seminorms $\nu_f(\sigma):=\nu(f^*\sigma)$, where $f$ is a smooth map from a finite dimensional manifold $Y$ to $LX$, and $\nu$ is a continuous seminorm on the Fr\'echet space 
$\Omega^j(Y)$, and by equipping $\widehat{\Omega}(LX)$ with the product topology. Given $\sigma\in  \Omega(X) $ and $t\in \T$ one defines $\sigma(t)\in \Omega(LX)$ to be the pullback of $\sigma$ with respect to the evaluation $\gamma\mapsto \gamma(t)$. \\
Consider the Fr\'echet space of $\T$-invariant differential forms $\Omega_{\T}(X\times \T)$ on $X\times \T$, with $\T$ acting on the second slot. With $\vartheta_\T\in \Omega(\T)$ the volume form, any $\theta\in\Omega_{\T}(X\times \T)$ can be uniquely written 
in the form $\theta=\theta'+\vartheta_{\T}\wedge \theta''$ with $\theta',\theta''\in \Omega(X)$. \\
Associated to this construction, there is the space of \emph{entire chains} $\mathsf{C}^{\epsilon}_\T(X)$ which is defined as the completion of 
$$
\mathsf{C}_\T(X):=\bigoplus^\infty_{N=0}\Omega_{\T}(X\times \T)\otimes \underline{\Omega}_{\T}(X\times \T)^{\otimes N},
$$
with
$$
\underline{\Omega}_{\T}(X\times \T)^{\otimes N} := \Omega_{\T}(X\times \T)^{\otimes N}/(\C\cdot 1)
$$
and where $\mathsf{C}_\T(X)$ is equipped with the following family of seminorms: given any continuous seminorm $\nu$ on $\Omega_{\T}(X\times \T)$, one gets the induced projective tensor norm 
$$
\pi_{\nu, N}\quad \text{on}\quad\Omega_{\T}(X\times \T)\otimes \underline{\Omega}_{\T}(X\times \T)^{\otimes N},
$$
and then a seminorm $\epsilon_\nu$ on $\mathsf{C}_\T(X)$ by setting
\begin{equation} \label{EntireNorm}
 \epsilon_\nu(c) :=  \sum_{N=0}^\infty \frac{\pi_{\nu, N}(c_N)}{ \lfloor N/2\rfloor!}, 
\end{equation}
if 
$$
c = \sum_{N=0}^\infty c_N \in \mathsf{C}_\T(X),\quad\text{with $c_N\in \Omega_{\T}(X\times \T)\otimes \underline{\Omega}_{\T}(X\times \T)^{\otimes N}$ for all $N$. }
$$
The required family of seminorms is now given by $\epsilon_\nu$, where $\nu$ is a continuous seminorm on $\Omega_{\T}(X\times \T)$.\\
There exists a uniquely determined continuous map \cite{cg}, the equivariant \emph{Chen iterated integral map},
$$
\mathrm{Chen}_\T: \mathsf{C}^{\epsilon}_\T(X)\longrightarrow \widehat{\Omega}(LX).
$$
such that for all $N\in\N_{\geq 0}$, $\theta_0,\dots,\theta_N\in \theta\in\Omega_{\T}(X\times \T)$, one has  
\begin{align}\label{rssd}
&\mathrm{Chen}_\T(\theta_0\otimes\cdots\otimes \theta_N)\\
&=\int_{\{0\leq t_1\leq\cdots\leq t_N\leq 1\}} \theta_0(0)\wedge (\iota\theta_1'(t_1)-\theta_1''(t_1))\wedge\cdots\wedge  (\iota\theta_N'(t_N)-\theta_N''(t_N))  \; dt_1\cdots dt_N.
\end{align}

\begin{definition} The space of \emph{integrable Chen forms} $\widetilde{\Omega}(LX)\subset \widehat{\Omega}(LX)$ is defined as the image of $\mathrm{Chen}_\T$.
\end{definition}

Set
$$
\widetilde{\Omega}_{\T}(LX):=\widetilde{\Omega}(LX)\cap \widehat{\Omega}_\T(LX).
$$

The following result follows essentially from calculations made in \cite{cg}. A detailed proof will be given in Section \ref{sec:proof-of-aux}.

\begin{proposition}\label{aux}
There is a well-defined supercomplex
\begin{align}\label{complint}
\cdots \xrightarrow{d-\iota} \widetilde{\Omega}^{+}_{\T}(LX)\xrightarrow{d-\iota}\widetilde{\Omega}^{-}_{\T}(LX)\xrightarrow{d-\iota}\widetilde{\Omega}^{+}_{\T}(LX)\xrightarrow{d-\iota}\cdots. 
\end{align}
\end{proposition}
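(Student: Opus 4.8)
The plan is to reduce the whole statement to the single assertion that $d+\iota$ maps integrable Chen forms to integrable Chen forms, and to prove that assertion by transporting $d+\iota$ to a differential on the entire chain complex via $\Psi$.

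First, unpack what ``well-defined supercomplex'' requires: (a) $d+\iota$ is parity-odd and carries $\widetilde{\Omega}^{+}_{\T}(LX)$ into $\widetilde{\Omega}^{-}_{\T}(LX)$ and back; (b) $(d+\iota)^2=0$ on $\widetilde{\Omega}_{\T}(LX)$; and (c) $\widetilde{\Omega}_{\T}(LX)$ is a graded subspace, $\widetilde{\Omega}_{\T}(LX)=\widetilde{\Omega}^{+}_{\T}(LX)\oplus\widetilde{\Omega}^{-}_{\T}(LX)$. Items (b) and (c) are soft. By Cartan's formula $(d+\iota)^2=d\iota+\iota d=\mathcal{L}_{\mathbb{A}}$ on all of $\widehat{\Omega}(LX)$, and $\widehat{\Omega}_{\T}(LX)=\ker\mathcal{L}_{\mathbb{A}}$ by definition, so $(d+\iota)^2$ annihilates $\widetilde{\Omega}_{\T}(LX)$; the same identity shows $d+\iota$ commutes with the even operator $\mathcal{L}_{\mathbb{A}}$, hence preserves the graded subspace $\widehat{\Omega}_{\T}(LX)=\ker\mathcal{L}_{\mathbb{A}}$. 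And $\widetilde{\Omega}(LX)=\im\Psi$ is graded because $\Psi$ is parity-even: reading off (\ref{rssd}), a homogeneous tensor $\theta_0\otimes\cdots\otimes\theta_N$ with $\theta_j$ of degree $p_j$ on $X\times\T$ is sent to a form of degree $\sum_j p_j-N$, so with this induced $\Z/2$-grading on $\mathsf{C}^{\epsilon}_\T(X)$ the map $\Psi$ preserves parity, and the image of a parity-preserving map with graded source is graded; intersecting two graded subspaces, $\widetilde{\Omega}_{\T}(LX)$ is graded.

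It remains to establish (a), and for this it suffices to show $(d+\iota)(\im\Psi)\subseteq\im\Psi$: then for $\omega\in\widetilde{\Omega}_{\T}(LX)$ one gets $(d+\iota)\omega\in\widetilde{\Omega}(LX)\cap\widehat{\Omega}_{\T}(LX)=\widetilde{\Omega}_{\T}(LX)$ by the previous paragraph, and parity-oddness is clear. I would prove $(d+\iota)(\im\Psi)\subseteq\im\Psi$ by exhibiting a continuous, parity-odd operator $\partial$ on $\mathsf{C}^{\epsilon}_\T(X)$ with $(d+\iota)\circ\Psi=\Psi\circ\partial$; then $(d+\iota)\Psi(c)=\Psi(\partial c)\in\im\Psi$ for every entire chain $c$. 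On the dense subspace $\mathsf{C}_\T(X)$ one takes $\partial=b+B$, the Hochschild- and Connes-type operators of the equivariant cyclic bar complex from \cite{cg}, adapted to the decomposition $\theta=\theta'+\vartheta_\T\wedge\theta''$: schematically $b$ is a signed sum of terms applying the de Rham differential of $X\times\T$ to one factor or wedging two adjacent factors, so $b$ changes the tensor length by $0$ or $-1$, while $B$ is built from cyclic permutations together with insertion of a unit modulo $\C\cdot 1$, so $B$ raises the length by $1$. The algebraic identities $\Psi\circ(b+B)=(d+\iota)\circ\Psi$ and $(b+B)^2=0$ on $\mathsf{C}_\T(X)$ are precisely the Stokes-theorem-on-simplices and rotation-invariance computations carried out in \cite{cg}, which I would transcribe into the present notation.

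The main obstacle is analytic: to pass from $\mathsf{C}_\T(X)$ to its completion $\mathsf{C}^{\epsilon}_\T(X)$ one must show $b$ and $B$ are bounded for every entire seminorm $\epsilon_\nu$ of (\ref{EntireNorm}). The $b$-part is benign because it does not increase $N$; bounding $\nu$ of a wedge of factors by a fixed larger seminorm produces a constant polynomial in $N$, which is swallowed by $1/\lfloor N/2\rfloor!$. The $B$-part is the delicate one, as it sends the degree-$N$ component into degree $N+1$, so one must absorb the ratio $\lfloor (N+1)/2\rfloor!/\lfloor N/2\rfloor!$, which grows only linearly in $N$ over two steps --- this is exactly why the normalization in (\ref{EntireNorm}) uses $\lfloor N/2\rfloor!$ rather than $N!$, in parallel with the standard boundedness estimates for entire cyclic (JLO-type) complexes. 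Granting these estimates, $\partial=b+B$ extends continuously to $\mathsf{C}^{\epsilon}_\T(X)$, the identities $\Psi\circ\partial=(d+\iota)\circ\Psi$ and $\partial^2=0$ survive on the completion by density and continuity, and the proposition follows from the reduction above.
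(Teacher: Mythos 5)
Your soft observations are fine: $(d+\iota)^2=\mathcal L_{\mathbb A}$ vanishes on $\widehat{\Omega}_\T(LX)=\ker\mathcal L_{\mathbb A}$, $d+\iota$ commutes with $\mathcal L_{\mathbb A}$ and so preserves $\widehat{\Omega}_\T(LX)$, and $\widetilde{\Omega}(LX)=\im\Psi$ is $\Z/2$-graded because $\Psi$ is parity-preserving once $\mathsf{C}_\T(X)$ is given the shifted grading. These parts match the content implicit in the paper.

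The core of your argument, however, has a genuine gap. You reduce to the claim $(d+\iota)(\im\Psi)\subseteq\im\Psi$ and propose to verify it via an intertwining $(d+\iota)\circ\Psi=\Psi\circ(b+B)$. No such identity holds. Looking at the explicit formula \eqref{rssd}, the iterated integral has a distinguished basepoint: $\theta_0$ is evaluated at the fixed time $0$. When $\iota$ (contraction by $\gamma\mapsto\dot\gamma$) hits $\theta_0'(0)$ it produces $\iota\theta_0'(0)$, which depends on $\dot\gamma(0)$ and hence is \emph{not} a pullback along $\mathrm{ev}_0$ of any form on $X$; therefore $\iota\Psi(\theta_0\otimes\cdots\otimes\theta_N)$ is not of Chen form again. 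The operator $\iota$ simply does not preserve $\im\Psi$. What \cite{cg} actually supplies (and what the paper uses) is
$$
\Psi(d+\iota_{\partial_\T}+b)=d\Psi,\qquad \Psi B+ \mathbf{A}\Psi\iota_{\partial_\T}=\mathbf{A}\iota\Psi,
$$
where $\mathbf{A}$ is the idempotent averaging the $\T$-action. Note two things: the chain-side differential is $d+\iota_{\partial_\T}+b+B$, not just $b+B$ (you dropped the factor-wise DGA piece), and the $\iota$-half only holds after composing with $\mathbf{A}$, because averaging removes the distinguished basepoint. Combining with $[\mathbf A,d]=[\mathbf A,\iota]=0$ gives $\mathbf{A}\Psi(d+\iota_{\partial_\T}+b+B)=(d+\iota)\mathbf{A}\Psi$.

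This is precisely why the paper never proves $(d+\iota)(\im\Psi)\subseteq\im\Psi$; it argues only on $\widetilde{\Omega}_\T(LX)$. For $\T$-invariant $\alpha=\Psi(\theta)$ one has $\alpha=\mathbf{A}\Psi(\theta)$, and then $(d+\iota)\alpha=\mathbf{A}\Psi\bigl((d+\iota_{\partial_\T}+b+B)\theta\bigr)$, which is simultaneously an integrable Chen form (via the cited fact that $\mathbf A\Psi$ factors through $\Psi$, $\rho=\mathbf A\Psi$ in the notation of \cite{cg}) and manifestly in the image of $\mathbf{A}$, hence $\T$-invariant. The $\T$-invariance hypothesis is used from the start, not checked at the end; your plan to first show $(d+\iota)$ preserves $\im\Psi$ and then intersect with invariants fails at the first step. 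Your remarks about boundedness of the chain-side operators in the entire seminorms are reasonable in spirit but are, in the paper, simply imported from \cite{cg}; the heart of the proof is the $\mathbf{A}$-twisted intertwining, not those estimates.
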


The associated dual supercomplex will be denoted with
\begin{align}\label{assm}
\cdots \xrightarrow{d-\iota} \widetilde{\Omega}^{\T}_{+}(LX)\xrightarrow{d-\iota}\widetilde{\Omega}^{\T}_{-}(LX)\xrightarrow{d-\iota}\widetilde{\Omega}^{\T}_{+}(LX)\xrightarrow{d-\iota}\cdots.
\end{align}

Let us now give the formula for $\mathfrak{J}^g$. Recall that we have fixed a topological spin structure on $X$. Consider the (super) spinor bundle $\Sigma_g\to X$ induced by $g$, with its (essentially self-adjoint) Dirac operator $D_g$ on the super Hilbert space of 
$L^2$-spinors $\Gamma_{L^2}(X,\Sigma_g)$, and the (natural extension to differential forms of all degrees of the) Clifford multiplication
$$
c_g:\Omega(X)\longrightarrow \Gamma_{C^{\infty}}(X,\mathrm{End}(\Sigma_g)).
$$
Let $\Psi(X,\Sigma_g)$ denote the super algebra of pseudodifferential operators in $\Sigma_g\to X$. With $H_g:=D_g^2$, we define a linear map
\begin{align*}
&F_g:\mathsf{B}_\T(X):=\bigoplus^\infty_{N=0}\underline{\Omega}_{\T}(X\times \T)^{\otimes N}\longrightarrow \Psi(X,\Sigma_g),\\
  &F^{(0)}_{g}:= H_g,\\
  &F^{(1)}_{g}(\theta)= - c_g(d \theta^\prime) + [D_g, c_g(\theta^\prime)] + c_g(\theta^{\prime\prime}),\\
  &F^{(2)}_g(\theta_1, \theta_2)= (-1)^{|\theta_1^\prime|}\bigl(c_g({\theta}_1^\prime{\theta}_2^\prime) - c_g({\theta}_1^\prime)c_g({\theta}_2^\prime)\bigr),\\
	&F_g^{(N)}(\theta_1,\dots,\theta_N)=0,\quad\text{ if $N\geq 3$,} 
\end{align*}
where here and in the sequel all commutators are super-commutators. \\
For $M\leq N $ denote with $P_{M, N}$ all tuples $I=(I_1, \dots, I_M)$ of subsets of $\{1 \dots, N\}$ with $I_1 \cup \dots \cup I_M = \{1 \dots, N\}$ and with each element of $I_a$ smaller 
than each element of $I_b$ whenever $a < b$. Given 
$$
\theta_1,\dots ,\theta_N\in\Omega_\T(X\times \T),\quad I=(I_1, \dots, I_M)\in P_{M, N}, \quad 1\leq a\leq M, 
$$
set
$$
\theta_{I_a}:= (\theta_{i+1} , \dots , \theta_{i+m}),\quad\text{ if $I_a = \{j \mid i < j \leq i+m\}$ for some $i, m$.}
$$
We finally define a linear map
\begin{align*}
&\Phi^g:\mathsf{B}_\T(X)\longrightarrow \Psi(X,\Sigma_g),\\
&	\Phi^g(\theta_1,\dots,\theta_N)=\sum_{M=1}^N (-1)^M \sum_{I \in P_{M, N}} \int_{\{0\leq t_1\leq \cdots\leq t_M\leq 1\}} e^{-t_1H_g} F_{g}(\theta_{I_1})e^{-(t_2-t_1)H_g}F_{g}(\theta_{I_2})\cdots\\
	&\quad \quad\quad\quad\quad\quad\quad\quad\cdots e^{-(t_M-t_{M-1})H_g} F_{g}(\theta_{I_M}) e^{-(1-t_M)H_g} \ d t_1\cdots dt_M.
\end{align*}

The linear map
\begin{align*}
&\alpha:\mathsf{C}_\T(X)\longrightarrow \mathsf{B}_\T(X),\\
& \alpha(\theta_0\otimes\dots\otimes\theta_N):=\sum_{k=1}^N(-1)^{n_k(n_N-n_k)}(\theta_{k+1}\otimes\dots\otimes\theta_N\otimes \cdots\otimes\theta_k),
\end{align*}
where $n_j\coloneqq|\theta_1|+\cdots|\theta_j|-j$, induces a linear map
$$
\alpha_g:\mathrm{Hom}(\mathsf{B}_\T(X),\Psi(X,\Sigma_g))\longrightarrow \mathrm{Hom}(\mathsf{C}_\T(X),\Psi(X,\Sigma_g)),
$$
given explicitly by
$$
[\alpha_g l](\theta_0,\dots,\theta_N)=\sum^{N+1}_{k=1}(-1)^{m_{k-1}(m_N-m_{k-1})}l(\theta_k,\dots,\theta_N,\vartheta_\T\wedge \theta_0,\theta_1,\dots,\theta_{k-1})\,,
$$
where $m_k \coloneqq |\theta_0|+\ldots+|\theta_k|-k$. With $\mathrm{Str}_g$ the supertrace in $\Gamma_{L^2}(X,\Sigma_g)$, the following is the main result of \cite{gl}:

\begin{theorem} There exists a uniquely determined current $\mathfrak{J}^g: \widetilde{\Omega}(LX)\to \C$ such that for all $N\in\N_{\geq 0}$, $\theta_0,\dots,\theta_N\in \Omega_{\T}(X\times \T)$ one has 
\begin{align}\label{formelo}
&\mathfrak{J}^g\left[\int_{\{0\leq t_1\leq\cdots\leq t_N\leq 1\}} \theta_0'(0)\wedge (\iota\theta_1'(t_1)-\theta_1''(t_1))\wedge\cdots\wedge  (\iota\theta_N'(t_N)-\theta_N''(t_N))  \; dt_1\cdots dt_N\right]\\\nonumber
&= -\mathrm{Str}_g\left([\alpha_g\Phi^g](\theta_0,\dots,\theta_N)\right).
\end{align}
Moreover, $\mathfrak{J}^g$ is even and $(d-\iota)\mathfrak{J}^g=0$, so that $\mathfrak{J}^g$ defines an even homology class in the homology of (\ref{assm}), and one has the localization formula
$$
\mathfrak{J}^g[\sigma]= \int_X \hat{A}(X,g)\wedge \sigma|_{X}\quad\text{for all $\sigma\in\widetilde{\Omega}(LX)$ with $(d-\iota)\sigma=0$.}
$$ 
\end{theorem}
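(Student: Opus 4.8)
The plan is to build $\mathfrak{J}^g$ not on $LX$ directly but on the entire chains: take the right-hand side of (\ref{formelo}) as the \emph{definition} of a linear functional $\tau_g$ on the algebraic chain space $\mathsf{C}_\T(X)$, and then show that $\tau_g$ factors continuously through $\Psi$. Two points are needed. First, continuity with respect to $\epsilon_\nu$ from (\ref{EntireNorm}): each $c_g(\theta_0)$ and each $F_g(\theta_{I_a})$ is a differential operator of order $\le 1$ whose relevant operator norm is dominated by a continuous seminorm of the $\theta_i$; since $F_g$ vanishes on blocks of size $\ge 3$, only partitions into blocks of size $1$ or $2$ contribute, so the number $M$ of heat-semigroup factors satisfies $\lceil N/2\rceil\le M\le N$, and the standard trace-ideal heat-semigroup estimates familiar from entire cyclic cohomology give a bound of the shape $C^N\prod_i\nu(\theta_i)/\lfloor N/2\rfloor!$ — which is precisely what $\epsilon_\nu$ controls. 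Hence $\tau_g$ extends continuously to $\mathsf{C}^{\epsilon}_\T(X)$. Second, $\tau_g$ must vanish on $\ker\Psi$: by \cite{cg} this kernel is generated by the usual iterated-integral relations (shuffle relations and the relations expressing $\theta_i\theta_{i+1}$-products and $d\theta_i$-terms), and these are matched term by term by the very definitions $F_g(\theta)=c_g(d\theta')-[D_g,c_g(\theta')]-c_g(\theta'')$ and $F_g(\theta_1,\theta_2)=(-1)^{|\theta_1'|}(c_g(\theta_1'\theta_2')-c_g(\theta_1')c_g(\theta_2'))$, together with cyclicity of $\mathrm{Str}_g$ and the combinatorics of $P_{M,N}$. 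Consequently $\tau_g$ descends to a continuous functional $\mathfrak{J}^g$ on $\widetilde{\Omega}(LX)=\Psi(\mathsf{C}^{\epsilon}_\T(X))$ obeying (\ref{formelo}); uniqueness is automatic since the $\Psi(\theta_0\otimes\cdots\otimes\theta_N)$ span a dense subspace.

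Evenness is a parity count. Writing $\theta=\theta'+\vartheta_\T\wedge\theta''$ with $|\theta'|=\deg\theta$ and $|\theta''|=\deg\theta-1$, the Chen integrand $\iota\theta'(t)-\theta''(t)$ has degree $\deg\theta-1\pmod 2$, so $\Psi(\theta_0\otimes\cdots\otimes\theta_N)$ has degree $\equiv|\theta_0|+\sum_{i\ge 1}(|\theta_i|-1)\pmod 2$; on the operator side $c_g$ of a $k$-form has Clifford parity $k$, $D_g$ is odd, and $F_g(\theta_i)$, $F_g(\theta_i,\theta_{i+1})$ carry exactly the matching parities, so the operator appearing under $\mathrm{Str}_g$ has the same parity as the form degree of the Chen form. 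Since $\mathrm{Str}_g$ annihilates odd operators, $\mathfrak{J}^g$ vanishes on odd forms.

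The identity $(d+\iota)\mathfrak{J}^g=0$, i.e.\ $\mathfrak{J}^g[(d+\iota)\alpha]=0$ for all $\alpha\in\widetilde{\Omega}(LX)$, is the main obstacle and the computational heart. By Proposition~\ref{aux} (equivalently, by the compatibility of the equivariant Chen map with differentials proved in \cite{cg}) there is a differential $\delta$ on $\mathsf{C}^{\epsilon}_\T(X)$, of Hochschild-plus-Connes type twisted by the equivariant structure, with $\Psi\circ\delta=(d+\iota)\circ\Psi$; hence it suffices to prove $\tau_g\circ\delta=0$, i.e.\ that $\tau_g$ is a cocycle. This is the equivariant, Chen-theoretic analogue of the fact that the Jaffe--Lesniewski--Osterwalder cochain of a $\theta$-summable Dirac operator is an entire cyclic cocycle: one splits $\delta$ into its pieces, uses Duhamel's formula $\partial_s e^{-sD_g^2}=-D_g^2e^{-sD_g^2}$ and the commutation identities for $[D_g^2,c_g(\cdot)]$, the graded Leibniz rule to move $D_g$ and $D_g^2$ across the $F_g$-insertions, and finally $\mathrm{Str}_g([D_g,\cdot])=0$ together with cyclicity of $\mathrm{Str}_g$; after the resulting telescoping across adjacent simplex faces all terms cancel in pairs. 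The bookkeeping is heavier than in the classical case because of the extra $c_g(d\theta')$ and $c_g(\theta'')$ summands in $F_g$ (encoding the de Rham differential on $X$ and the $\vartheta_\T$-component), the two-argument term $F_g(\theta_1,\theta_2)$ (producing the shuffle/curvature corrections), and the need to carry $\T$-invariance and the contraction $\iota$ throughout; granting the intertwining from \cite{cg}, the proof reduces to a long but mechanical identity among simplex-integrated operator products.

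Finally, the localization formula. For $(d+\iota)$-closed $\alpha$, the cocycle property just established shows that $\mathfrak{J}^g[\alpha]$ depends only on the class of $\alpha$ in the homology of (\ref{assm}); and $\int_X\hat A(M,g)\wedge\alpha|_X$ does too, since $\hat A(M,g)$ is closed and, on constant loops, the vector field $\mathbb{A}$ vanishes, so $\iota$ contributes nothing after restriction, whence $(d+\iota)\beta|_X=d(\beta|_X)$ and $\int_X\hat A(M,g)\wedge d(\beta|_X)=0$ by Stokes. It therefore suffices to verify the identity on a family of closed forms whose restrictions to $X$ span $H^*_{\mathrm{dR}}(X)$: using (the dual of) the standard quasi-isomorphism between (\ref{complint}) and the de Rham complex of $X$ given by restriction to constant loops — for which one produces explicit $(d+\iota)$-closed Chen-integral lifts of closed forms on $X$ — one reduces to such representatives, and there (\ref{formelo}) collapses to a supertrace expression of McKean--Singer type which the local index theorem (Getzler rescaling, Mehler's formula, the Lichnerowicz identity $D_g^2=\nabla^*\nabla+\tfrac14\,\mathrm{scal}$) identifies with $\int_X\hat A(M,g)\wedge\omega$. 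Combining the three reductions yields $\mathfrak{J}^g[\alpha]=\int_X\hat A(M,g)\wedge\alpha|_X$ for all $(d+\iota)$-closed $\alpha$.
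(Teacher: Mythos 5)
The paper does not actually prove this theorem: it is stated with the preamble \emph{``the following is the main result of \cite{gl}''}, and no argument is supplied in the present paper. So there is no proof here to compare against; the comparison would have to be with \cite{gl}.

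That said, your sketch does follow the architecture that the paper itself reveals about how $\mathfrak{J}^g$ is obtained. In the proof of Theorem~\ref{main}, the paper records that the functional $\mathrm{Ch}_{g}$ (the right-hand side of \eqref{formelo}) is a map $\mathsf{C}^{\epsilon}_\T(X)\to\C$ that ``vanishes on the kernel of $\Psi$ and this defines $\mathfrak{J}^{g}$,'' which is exactly your strategy of defining $\tau_g$ on entire chains, establishing entirety, and descending along $\Psi$. Your continuity estimate is essentially right: only blocks of size $\le 2$ survive, so $M\ge\lceil N/2\rceil$, the simplex volume $1/M!$ then beats $\lfloor N/2\rfloor!$, and the residual $C^{N}$ is absorbed into a larger seminorm $\nu'$ as the locally convex framework allows. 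The parity count for evenness and the JLO cocycle computation for $(d+\iota)$-closedness are the standard entire cyclic arguments and are the expected content of \cite{gl}.

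Two points in your sketch are, however, not quite right as stated and mark real work that a full proof must do. First, you appeal to $\Psi\circ\delta=(d+\iota)\circ\Psi$, but what the paper actually establishes (Section~\ref{sec:proof-of-aux}, following \cite{cg}) is the averaged intertwining $\mathbf{A}\Psi(d+\iota_{\partial_\T}+b+B)=(d+\iota)\mathbf{A}\Psi$; one has to argue separately that $\tau_g$ is $\mathbf{A}$-invariant before the cocycle property of $\tau_g$ implies $(d+\iota)\mathfrak{J}^g=0$. Second, your treatment of ``vanishing on $\ker\Psi$'' is asserted but carries most of the weight: the kernel of the \emph{equivariant} Chen map is not simply generated by classical shuffle and integration-by-parts relations, and matching its generators against the algebraic structure of $F_g$ under $\mathrm{Str}_g$ is the nontrivial compatibility proved in \cite{gl}. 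Finally, your localization argument leans on a quasi-isomorphism between \eqref{complint} and $\Omega(X)$ via restriction to constant loops; this statement is plausible (in the spirit of Jones--Petrack) but is itself a theorem that would need to be proved in the Chen-form model, whereas the actual route in \cite{gl} is more direct, evaluating the entire cocycle on explicit $(d+\iota)$-closed representatives (the Bismut--Chern character) and invoking Getzler rescaling without passing through a cohomological reduction.
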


That this definition of $\mathfrak{J}^g$ is natural, in the sense that it really serves as an \emph{implementation} of the right hand side of (\ref{dreckig}), has been indicated in \cite{hl1} using the Pfaffian line bundle. A probabilistic representation of $\mathfrak{J}^g$ has been derived in \cite{hl2}, generalizing the earlier result from \cite{bg} for $N=1$ to all orders.\vspace{2mm}

Assume $g_\bullet=(g_t)_{t\in [0,1]}$ is a smooth family of Riemannian metrics on $X$. We briefly recall the Bourguignon-Gauduchon machinery for metric changes of the Dirac operator \cite{Bourguignon}. For any $t\in [0,1]$, define a section $\mathcal{A}^{g_\bullet}_t$ of $\End(TX)$ by
\[
g_0(u,v) = g_t(\mathcal{A}^{g_\bullet}_t u,v) \qquad \text{for all }\qquad  x\in X, u,v\in T_xX\,.
\]
Then $\mathcal{A}^{g_\bullet}_t$ is strictly positive w.r.t.\ $g_t$ and $g_0$ and  $(\mathcal A_t^{g_\bullet})^{-1/2}$ is a pointwise isometry $(TX,g_t)\to (TX,g_0)$. It therefore lifts canonically to an $\mathrm{SO}(n)$-equivariant bundle map
\[
\mathcal{A}^{g_\bullet,\mathrm{SO}}_t:\mathrm{SO}(X,g_t)\longrightarrow \mathrm{SO}(X,g_0)\,,
\] 
where $\mathrm{SO}(X,g_t)$ denotes the bundle of oriented orthonormal frames of $X$ w.r.t.\ the Riemannian metric $g_t$.\\
Now recall that we have fixed a topological spin structure. This implies that every Riemannian metric $g_t$ canonically induces a Riemannian spin structure on $X$, i.e., a $\mathrm{Spin}(n)$-principal fibre bundle $P_{g_t}$ over $X$ together with a 
$\xi$-equivariant map $\pi_{g_t}:P_t\to \mathrm{SO}(X,g_t)$ such that $(P_t,\pi_{g_t})$ is a $\xi$-reduction of $\mathrm{SO}(X,g_t)$. Here, $\xi:\mathrm{Spin}(n)\to \mathrm{SO}(n)$ is the canonically given double cover. Furthermore, $(P_{g_t}, \pi_{g_t})$ being associated with 
a fixed topological spin structure, the map $\mathcal{A}^{g_\bullet,\mathrm{SO}}_t$ lifts to an equivariant bundle map $\mathcal{A}^{g_\bullet,P}_t:P_{g_t}\to P_{g_0}$ and through the associated vector bundle construction, we obtain a fibrewise isometric vector bundle isomorphism
\[
\mathcal{A}^{g_\bullet,\Sigma}_t : \Sigma_{g_t} \longrightarrow \Sigma_{g_0} \,,
\]
which moreover satisfies
\[
\mathcal{A}^{g_\bullet,\Sigma}_t(c_{g_t}(\theta)(\varphi)) = c_{g_0}(\sqrt{(\mathcal{A}^{g_\bullet}_t)'}(\theta))(\mathcal{A}^{g_\bullet,\Sigma}_t(\varphi))\quad \text{ for all } \quad x\in X, \theta \in T_x^*X, \varphi\in (\Sigma_{g_t})_x\,,
\]
where $(\mathcal{A}^{g_\bullet}_t)'\in \Gamma_{C^{\infty}}(X,\End(TX^*))$ denotes the section fibrewise dual to $\mathcal{A}^{g_\bullet}_t$.\\
With 
$$
0 < \rho^{g_\bullet}_t = \dd \mu_{g_0} / \dd\mu_{g_t}\in C^\infty(X) 
$$
the Radon-Nikodym density of $\mu_{g_0}$ w.r.t.\ $\mu_{g_t}$, we obtain the unitary operator
\begin{align*}
	U^{g_\bullet}_{t}:\Gamma_{L^2}(X,\Sigma_{g_t})&\longrightarrow \Gamma_{L^2}(X,\Sigma_{g_0})\\
	U^{g_\bullet}_{t}\varphi(x) &= (\rho^{g_\bullet}_t)^{-1/2}\mathcal{A}^{g_\bullet,\Sigma}_t(\varphi(x))\,,
\end{align*}
which we use to define a family $\mathscr{M}^{g_\bullet}$ of $\vartheta$-summable Fredholm modules over $\Omega(X)$ in the sense of Definition 2.1 in \cite{gl}, by
\begin{equation}\label{eqn:def-theta-fredholm-module}
	\mathscr{M}^{g_\bullet}_t:=\big(\Gamma_{L^2}(X,\Sigma_{g_0}),c_t^{g_\bullet}, Q^{g_\bullet}_t\big):=\big(\Gamma_{L^2}(X,\Sigma_{g_0}), U^{g_\bullet}_{t}c_{g_t}U^{g_\bullet,*}_{t}, U^{g_\bullet}_{t}D_{g_t}U^{g_\bullet,*}_{t}\big)\,.
\end{equation}
Next, define
$$
\Xi_{g_\bullet,t}:\mathsf{B}_\T(X)\longrightarrow \Psi(X,\Sigma_{g_0})
$$
by 
$$	
\Xi^{(0)}_{g_\bullet,t}:= Q^{g_\bullet}_t,\quad \Xi^{(1)}_{g_\bullet,t}(\theta)= c_t^{g_\bullet}(\theta^\prime),\quad \Xi^{(N)}_{g_\bullet,t}(\theta_1,\dots,\theta_N)=0,\quad\text{ if $N\geq 2$,} 
$$
and
$$
\Phi^{g_\bullet}_{t,r}:\mathsf{B}_\T(X)\longrightarrow \Psi(X,\Sigma_{g_0})
$$
with $H^{g_\bullet}_t:=(Q^{g_\bullet}_t)^2$
for $0\leq r\leq 1$,
\begin{align*}
	&\Phi^{g_\bullet}_{t,r}:\mathsf{B}_\T(X)\longrightarrow \Psi(X,\Sigma_{g_0}),\\
	&	\Phi^{g_\bullet}_{t,r}(\theta_1,\dots,\theta_N)=\sum_{M=1}^N (-1)^M \sum_{I \in P_{M, N}} \int_{\{0\leq s_1\leq \cdots\leq s_M\leq r\}} e^{-s_1H^{g_\bullet}_t} F_{g_\bullet,t}(\theta_{I_1})e^{-(s_2-s_1)H^{g_\bullet}_t}F_{g_\bullet,t}(\theta_{I_2})\cdots\\
	&\quad \quad\quad\quad\quad\quad\quad\quad\cdots e^{-(s_M-s_{M-1})H^{g_\bullet}_t} F_{g_\bullet,t}(\theta_{I_M}) e^{-(1-s_M)H^{g_\bullet}_t} \ d s_1\cdots ds_M.
\end{align*}
and 
\begin{align*}
&F_{g_\bullet,t}:\mathsf{B}_\T(X):\longrightarrow \Psi(X,\Sigma_{g_0}),\\
  &F^{(0)}_{g_\bullet,t}:= H^{g_\bullet}_t,\\
&F^{(1)}_{g_\bullet,t}(\theta)= -c_t^{g_\bullet}(d \theta^\prime) + [Q^{g_\bullet}_t, c_t^{g_\bullet}(\theta^\prime)] + c_t^{g_\bullet}(\theta^{\prime\prime}),\\
&F^{(2)}_{g_\bullet,t}(\theta_1, \theta_2)= (-1)^{|\theta_1^\prime|}\bigl(c_t^{g_\bullet}({\theta}_1^\prime{\theta}_2^\prime) - c_t^{g_\bullet}({\theta}_1^\prime)c_t^{g_\bullet}({\theta}_2^\prime)\bigr),\\
&F_{g_\bullet,t}^{(N)}(\theta_1,\dots,\theta_N)=0,\quad\text{ if $N\geq 3$.} 
\end{align*}

The space $\mathrm{Hom}(\mathsf{B}_\T(X),\Psi(X,\Sigma_g))$ is turned into a super algebra by means of the product
$$
[l_1l_2](\theta_1,\dots,\theta_N)=\sum_{k=0}^{N}(-1)^{|l_2|(|\theta_1|+\dots+|\theta_k|-k)}l_1(\theta_1,\dots,\theta_k)l_2(\theta_{k+1},\dots,\theta_N).
$$

The following Chern-Simons type transgression formula is the main result of this paper:

\begin{theorem}\label{main} Assume $g_\bullet=(g_t)_{t\in [0,1]}$ is a smooth family of Riemannian metrics on $X$. Then there exists a uniquely given odd current $\mathfrak{C}^{g_\bullet}:\widetilde{\Omega}(LX)\to \C$ such that for all $N\in\N_{\geq 0}$, $\theta_0,\dots,\theta_N\in \Omega_{\T}(X\times \T)$ one has 
	\begin{align*}
		&\mathfrak{C}^{g_\bullet}\left[\int_{\{0\leq t_1\leq\cdots\leq t_N\leq 1\}} \theta_0'(0)\wedge (\iota\theta_1'(t_1)-\theta_1''(t_1))\wedge\cdots\wedge  (\iota\theta_N'(t_N)-\theta_N''(t_N))  \; dt_1\cdots dt_N\right]\\\nonumber
&=-\mathrm{Str}_{g_0}\Big(\Big[\alpha_{g_0} \int^1_0\int^1_0 \Phi^{g_\bullet}_{s,r} (d\Xi_{g_\bullet,t}/dt) \Phi^{g_\bullet}_{s,1-r} \ dr ds\Big](\theta_0,\cdots,\theta_N)\Big).
\end{align*}	
One has 
$\mathfrak{J}^{g_1}-\mathfrak{J}^{g_0}=(d-\iota)\mathfrak{C}^{g_\bullet}$; in particular, the homology class induced by $\mathfrak{J}^g$ in the homology of (\ref{assm}) does not depend on a particular choice of a Riemannian metric $g$ on $X$.
\end{theorem}

\begin{remark}\label{rem:explicit-formulae-C} The formula for $\mathfrak{C}^{g_\bullet}$ can be further evaluated by noting that
		\begin{align*}
&Q^{g_\bullet}_t=\frac 12 (\rho^{g_\bullet}_t)^{-1} c_{g_0}((\mathcal A^{g_\bullet}_t)^{-1/2}\mathrm{grad} \rho^{g_\bullet}_t) + \mathcal{A}^{g_\bullet,\Sigma}_tD_{g_t}\left(\mathcal{A}^{g_\bullet,\Sigma}_t\right)^{-1},\\
&c^{g_\bullet}_t(\theta)= c_{g_0}(\sqrt{(\mathcal{A}^{g_\bullet}_t)'}(\theta)),\\
&\d c^{g_\bullet}_t/\d t (\theta)=c_{g_0}((\dd \sqrt{(\mathcal{A}^{g_\bullet}_t)'}/ \dd t)(\theta))\,.
	\end{align*}	
A local formula for the elliptic first-order differential operator $\mathcal{A}^{g_\bullet,\Sigma}_tD_{g_t}\mathcal{A}^{g_\bullet,\Sigma,-1}_t$ can be found in \cite[Th\'{e}or\`{e}me 20]{Bourguignon}. From the above expression for $Q^{g_\bullet}_t$, one can derive an expression for the, in general nonelliptic, first-order differential operator $(\d / \d t)Q^{g_\bullet}_t$. The needed $t$-derivative of $\mathcal{A}^{g_\bullet,\Sigma}_tD_{g_t}\mathcal{A}^{g_\bullet,\Sigma,-1}_t$ is recorded in \cite[Th\'{e}or\`{e}me 21]{Bourguignon}.
\end{remark}

As a consequence we get:

\begin{corollary}
	Let $X$ and $Y$ be compact even-dimensional, oriented spin manifolds with fixed topological spin-structures. Assume there exists a diffeomorphism $f:X\to Y$ preserving orientations and topological spin-structures. Then, for any choice of Riemannian 
	metrics $g$ and $h$ on $X$ resp. on $Y$, the homology class induced by $\mathfrak{J}^g_X$ in the homology of (\ref{assm}) equals the homology class of $f^*\mathfrak{J}^h_Y$. 
\end{corollary}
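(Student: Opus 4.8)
The plan is to reduce the corollary to Theorem~\ref{main} via a straightforward naturality/functoriality argument. First I would observe that a diffeomorphism $f:M\to N$ preserving orientations and topological spin structures induces, by pullback of loops $\gamma\mapsto f^{-1}\circ\gamma$, a diffeomorphism $Lf:LM\to LN$ of the loop spaces which is $\T$-equivariant (it commutes with the rotation action, hence with $\mathbb{A}$ and $\iota$) and which intertwines the de Rham differentials. Consequently $f^*$ maps $\widehat{\Omega}_\T(LN)$ to $\widehat{\Omega}_\T(LM)$ and commutes with $d+\iota$. The key compatibility to check is that $f^*$ also preserves the class of integrable Chen forms, i.e.\ $f^*\widetilde{\Omega}(LN)=\widetilde{\Omega}(LM)$: this follows because the Chen iterated integral map $\Psi$ is natural with respect to the induced map on entire chains $\mathsf{C}^\epsilon_\T$ (which $f$ induces by pulling back $\T$-invariant forms on $N\times\T$ to $M\times\T$, an operation continuous for the $\epsilon_\nu$-seminorms since $f^*$ is continuous on the Fr\'echet spaces $\Omega_\T(\cdot\times\T)$), and $f^*\circ\Psi_N=\Psi_M\circ(f^*)^{\otimes(\bullet+1)}$ by inspecting formula~(\ref{rssd}).

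Next I would establish the metric naturality of $\mathfrak{J}$: namely $f^*\mathfrak{J}^h_N=\mathfrak{J}^{f^*h}_M$ as currents on $\widetilde{\Omega}(LM)$. This is where one uses that $f$ preserves the topological spin structure: the pullback spin structure on $(M,f^*h)$ is isometric, via the differential of $f$, to the chosen spin structure on $(N,h)$ pushed back, so there is a unitary isomorphism $\Sigma_{f^*h}\cong f^*\Sigma_h$ intertwining the Dirac operators $D_{f^*h}$ and $D_h$, the Clifford multiplications, the heat semigroups $e^{-tD^2}$, and the supertraces $\mathrm{Str}$. Plugging this into the defining formula~(\ref{formelo}) and matching it against the same formula for $\mathfrak{J}^h_N$ evaluated on $f^*$ of a Chen form (using the $\Psi$-naturality from the previous step to identify the arguments) gives the claimed identity; one must also verify that $F_{f^*h}(f^*\theta_1,\dots,f^*\theta_N)$ corresponds to $F_h(\theta_1,\dots,\theta_N)$ under the unitary, which is immediate from the explicit expressions since each term is built from $c$, $D$ and super-commutators.

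Finally I would combine these two facts with Theorem~\ref{main}. Choose the straight-line (or any smooth) path of metrics $g_\bullet$ on $M$ from $g$ to $f^*h$; Theorem~\ref{main} yields an odd current $\mathfrak{C}^{g_\bullet}$ with $\mathfrak{J}^{f^*h}_M-\mathfrak{J}^g_M=(d+\iota)\mathfrak{C}^{g_\bullet}$, so $[\mathfrak{J}^g_M]=[\mathfrak{J}^{f^*h}_M]$ in the homology of~(\ref{assm}). By the metric naturality, $\mathfrak{J}^{f^*h}_M=f^*\mathfrak{J}^h_N$, and since $f^*$ is a chain map between the complexes~(\ref{assm}) for $N$ and for $M$ (respectively their duals), it descends to homology; hence $[\mathfrak{J}^g_M]=[f^*\mathfrak{J}^h_N]=f^*[\mathfrak{J}^h_N]$, which is the assertion.

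The main obstacle I expect is the careful bookkeeping in the second step: pinning down the precise unitary identification of spinor bundles induced by a spin-structure-preserving diffeomorphism (as opposed to merely an isometry) and checking that it is compatible with \emph{all} the structures entering~(\ref{formelo}) — Dirac operator, heat kernels, Clifford action, orientation/chirality grading, and the operators $F_g$. The loop-space and Chen-form naturality (first and third steps) are essentially formal once the functoriality of $\Psi$ is noted, but the spin-geometric identification requires invoking the uniqueness statement behind Waldorf's Corollary~E (cited in the excerpt) to ensure that ``topological spin structure'' pulls back canonically enough for the Dirac-operator identity to hold on the nose.
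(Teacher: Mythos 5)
Your proposal is correct and follows essentially the same route as the paper: set $g_1:=f^*h$, observe that $f$ then becomes a metric-spin-structure-preserving isometry $(M,g_1)\to(N,h)$ furnishing unitary equivalences between the Clifford multiplications and Dirac operators, so that formula~(\ref{formelo}) gives $\mathfrak{J}^{g_1}_M = f^*\mathfrak{J}^h_N$, and then invoke Theorem~\ref{main} along a smooth path of metrics from $g$ to $g_1$. The paper leaves the loop-space and $\Psi$-naturality under $f^*$ implicit; you have simply spelled it out.
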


\begin{proof}
Setting $g_1:=f^*h$, the diffeomorphism $f$ becomes an orientation and metric spin-structure preserving isometry $f:(X,g_1)\to (Y,h)$ furnishing unitary equivalences between Clifford multiplications and Dirac operators on $(X,g_1)$ and $(Y,h)$. Formula 
\eqref{formelo} shows that $\mathfrak{J}^{g_1}_X$ and $f^*\mathfrak{J}^h_Y$ are equal, and Theorem~\ref{main} establishes the claim.
\end{proof}

We denote the homology class of $\mathfrak{J}^g$ for some/any Riemannian metric $g$ on $X$ by $\mathfrak{J}$, which by the previous corollary is a differential topological invariant of $X$. Let us identify this invariant: for every Riemannian metric $g$ on $X$, using Stokes formula, it is easily seen that the current 
$$
\underline{\hat{A}}(X,g):\widetilde{\Omega}(LX)\longrightarrow  \C,\quad \sigma\longmapsto \int_X \hat{A}(X,g)\wedge \sigma|_X
$$
satisfies $(d-\iota)\underline{\hat{A}}(X,g)=0$, and by Theorem E in combination with Lemma 9.3 from \cite{gl}, the homology class of $\underline{\hat{A}}(X,g)$ in (\ref{assm}) equals that of $\mathfrak{J}^{g}$. Moreover, by a standard transgression argument, the homology class of $\underline{\hat{A}}(X,g)$ does not depend on $g$. Putting everything together, it follows that this class $\underline{\hat{A}}(X)$ equals $\mathfrak{J}$.

\section{Proof of Proposition \ref{aux}}\label{sec:proof-of-aux}

We have to show that $d-\iota$ maps 
$$
\widetilde{\Omega}_{\T}(LX)=\widetilde{\Omega}(LX)\cap \widehat{\Omega}_\T(LX)
$$
to itself. We give $\Omega_{\T}(X\times \T)$ the $\mathbb{Z}$-grading
$$
\theta'+\vartheta_\T\wedge \theta''\in \Omega_{\T}(X\times \T)^j \Leftrightarrow \theta'\in \Omega^j(X), \theta''\in \Omega^{j+1}(X)
$$
and turn it into a locally convex DGA using the differential $d-\iota_{\partial_\T}$ with $\partial_\T$ the canonical vector field on $\T$. Then 
$\mathsf{C}_\T(X)$ inherits the $\mathbb{Z}$-grading induced by 
$$
\mathsf{C}_\T(X)=\bigoplus^\infty_{N=0}\Omega_{\T}(X\times \T)\otimes \underline{\Omega}_{\T}(X\times \T)[1]^{\otimes N},
$$
where $\underline{\Omega}_{\T}(X\times \T)[1]$ denotes $\underline{\Omega}_{\T}(X\times \T)$ as a set with the shifted grading
$$
\underline{\Omega}_{\T}(X\times \T)[1]^j:= \underline{\Omega}_{\T}(X\times \T)^{j+1}.
$$
With $b$ the Hochschild differential and $B$ the Connes differential in the $\Z$-graded category, the space $\mathsf{C}_\T(X)$ becomes a supercomplex with the differential $d-\iota_{\partial_\T}+b-B$. By continuity, the same holds true for 
$\mathsf{C}^\epsilon_\T(X)$.\\
Let
$$
\mathbb{A}:\widehat{\Omega}(LX)\longrightarrow \widehat{\Omega}(LX),\quad \sigma\longmapsto \int_\T \varphi^*_{\bullet}\sigma
$$
be the idempotent linear operator obtained by averaging the $\T$-action on $LX$, where
$$
\varphi_s:LX\longrightarrow LX,\quad \gamma\longmapsto \gamma(\bullet+s),\quad s\in\T.
$$
Note that it is implicitly used here that $\mathbb{A}$ preserves the image of $\mathrm{Chen}_\T$, which follows from a simple calculation. Then, as shown in \cite{cg}, one has the formulae
$$
\mathbb{A}\mathrm{Chen}_\T(d-\iota_{\partial_\T}+b-B)=(d-\iota \mathbb{A})\mathbb{A}\mathrm{Chen}_\T\,,
$$
noting that $\iota \mathbb{A} =  \mathbb{A}\iota$.

Assume that $\sigma\in \widetilde{\Omega}(LX)$ is $\T$-invariant. This means that $\sigma=\mathrm{Chen}_\T(\theta)$ for some $\theta\in \mathsf{C}^{\epsilon}_\T(X)$ and that $\mathbb{A}\mathrm{Chen}_\T(\theta)=\mathrm{Chen}_\T(\theta)$. Then we have
\begin{align*}
(d-\iota)\sigma= d\mathbb{A}\mathrm{Chen}_\T(\theta)- \iota\mathbb{A}^2\mathrm{Chen}_\T(\theta)=(d-\iota \mathbb{A})\mathbb{A}\mathrm{Chen}_\T(\theta)=\mathbb{A}\mathrm{Chen}_\T((d-\iota_{\partial_\T}+b-B)\theta),
\end{align*}
which shows that $(d-\iota)\sigma$ is $\T$-invariant and also that $(d-\iota)\sigma$ is a Chen form because $\mathbb A$ preserves $\widetilde{\Omega}(LX)$. This completes the proof.

\section{Proof of Theorem \ref{main}}

First of all, recall definition \eqref{eqn:def-theta-fredholm-module}. We are going to omit $g_\bullet$ everywhere in the notation. Consider the Chern character 
$$
\mathrm{Ch}_{g_t}: \mathsf{C}^{\epsilon}_\T(X)\longrightarrow \C,
$$
whose value at
$$
\theta_0\otimes\cdots\otimes \theta_N\in \mathsf{C}^{\epsilon}_\T(X) 
$$
is given by the RHS of (\ref{formelo}) for $g=g_t$. Then $\mathrm{Ch}_{g_t}$ vanishes on the kernel of $\mathrm{Chen}_{\T}$ and this defines $\mathfrak{J}^{g_t}$. If we can show that $\mathscr{M}^{g_\bullet}$ satisfies the axioms of Definition 6.1 in \cite{gl}, then (using that 
Chern characters are invariant under unitary transformations) it follows that the (odd) Chern-Simons form 
$$
\mathrm{CS}(\mathscr{M}^{g_\bullet}_\T):\mathsf{C}^{\epsilon}_\T(X)\longrightarrow \C
$$
constructed on page 31 in \cite{gl} satisfies
$$
\mathrm{Ch}_{g_1}-\mathrm{Ch}_{g_0}= (d-\iota_{\partial_\T}+b-B)\mathrm{CS}(\mathscr{M}^{g_\bullet}_\T)
$$
and vanishes on the kernel of $\Chen_\T$, too. It follows that 
$$
\mathfrak{C}^{g_\bullet}(\Chen_\T(\theta)):= \mathrm{CS}(\mathscr{M}^{g_\bullet}_\T)(\theta),\quad \theta\in \mathsf{C}^{\epsilon}_\T(X),
$$
is well-defined and, being invariant under $\mathbb{A}$ (which follows from its very construction), has the desired properties, in view of
$$
\mathbb{A}\Chen_\T(d-\iota_{\partial_\T}+b-B)=(d-\iota)\mathbb{A}\Chen_\T.
$$
It remains to show (H1), (H2) and (H3) from Definition 6.1 in \cite{gl}, where (H1) is the condition
$$
\sup_{t\in [0,1]}\mathrm{tr}\left(e^{-Q_t^2}\right)<\infty,
$$
(H2) is the condition
$$
\sup_{t\in [0,1]}\left\|\dot{Q}_t(Q_t^2+1)^{-1/2}\right\|+\sup_{t\in [0,1]}\left\|(Q_t^2+1)^{-1/2}\dot{Q}_t\right\|<\infty\,,
$$
and (H3) is the condition that for all $\theta\in \Omega_{\T}(X\times \T)$ the map
$$
t\mapsto c^{g_\bullet}_t(\theta) \in \left\{ \text{bounded operators in } \Gamma_{L^2}(X,\Sigma_{g_0}) \right\}
$$
is strongly $C^1$.

Here, (H1) can be seen as follows: one can appeal to the Lichnerowicz formula for $D_t^2$ and semigroup domination (cf. Theorem 3.1 in \cite{hsu}) to get
$$
\mathrm{tr}\left(e^{-Q_t^2}\right)\leq \mathrm{rank}(\Sigma_0) e^{-\min_{x\in X}(1/4)\mathrm{scal}_{g_t}(x)}\mathrm{tr}\left(e^{-\Delta_{g_t}}\right),
$$ 
which entails (H1), as $t\mapsto \min_{x\in X}(1/4)\mathrm{scal}_{g_t}(x)$ is clearly continuous, and $t\mapsto \mathrm{tr}\left(e^{-\Delta_{g_t}}\right)$ is smooth by Proposition 6.1 from \cite{ray}. \\
To see (H2) note that by elliptic regularity, each $Q_t:=U_{g_t}D_{g_t}U_{g_t}^*$ has the same domain of definition $W^{1,2}(X)$. Furthermore, $\dot{Q}_t:= (d/dt) Q_t$ is a first order differential operator, which we consider as acting on smooth spinors. 
The proof of (H2) is based on the following lemma, which is a modification of Lemma 4.17 in \cite{gp}:

\begin{lemma} Let $S$ be a densely defined, closed linear operator from a Hilbert space $\mathscr H_1$ to a Hilbert space $\mathscr H_2$, and let $T$ be a self-adjoint bounded linear operator in $\mathscr H_1$ with $T\geq -\lambda $ for some 
$\lambda\geq 0$. Assume that $S^*S+T\geq 0$. Then one has
\[
	\|S(S^*S + T + 1)^{-1/2}\|\leq \sqrt{\lambda +1}\,.
\]
\end{lemma}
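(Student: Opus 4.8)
The plan is to prove the operator‐norm bound by a direct spectral/quadratic‐form argument, since no commutator estimates or symbol calculus are actually needed. First I would note that it suffices to bound $\|S\xi\|^2$ for $\xi$ in the domain of $(S^*S+T+1)^{1/2}$, equivalently to show $\langle S^*S\,\eta,\eta\rangle\le(\lambda+1)\,\langle(S^*S+T+1)\eta,\eta\rangle$ for all $\eta$ in the form domain, after the substitution $\eta=(S^*S+T+1)^{-1/2}\xi$; the bounded positive operator $S^*S+T+1\ge 1$ is invertible with bounded square root, so this substitution is legitimate and the form domain of $S^*S+T+1$ coincides with the form domain $\dom(S^*S)^{1/2}=\dom(S)$ of $S^*S$ (here $T$ being bounded does not change the form domain).

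The key step is the elementary inequality
\begin{align*}
\langle S^*S\,\eta,\eta\rangle &= \langle(S^*S+T+1)\eta,\eta\rangle-\langle(T+1)\eta,\eta\rangle\\
&\le \langle(S^*S+T+1)\eta,\eta\rangle+\lambda\,\|\eta\|^2,
\end{align*}
using $T\ge-\lambda$ so that $T+1\ge 1-\lambda$, hence $-\langle(T+1)\eta,\eta\rangle\le(\lambda-1)\|\eta\|^2\le\lambda\|\eta\|^2$. On the other hand $\|\eta\|^2\le\langle(S^*S+T)\eta,\eta\rangle+\|\eta\|^2=\langle(S^*S+T+1)\eta,\eta\rangle$ by the hypothesis $S^*S+T\ge 0$. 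Combining the two displays gives $\langle S^*S\,\eta,\eta\rangle\le(1+\lambda)\langle(S^*S+T+1)\eta,\eta\rangle$, which upon substituting back $\eta=(S^*S+T+1)^{-1/2}\xi$ reads $\|S(S^*S+T+1)^{-1/2}\xi\|^2\le(1+\lambda)\|\xi\|^2$, i.e. the claimed bound.

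The only point requiring a little care — and the main (minor) obstacle — is the domain bookkeeping: one must check that $(S^*S+T+1)^{-1/2}$ maps $\mathscr H_1$ into $\dom(S)$ so that $S(S^*S+T+1)^{-1/2}$ is densely defined and the manipulation of quadratic forms is rigorous. This follows because $S^*S$ is self-adjoint and nonnegative (as $S$ is closed and densely defined), $T$ is bounded self-adjoint, so $S^*S+T$ is self-adjoint with form domain equal to that of $S^*S$, namely $\dom(|S|)=\dom(S)$; therefore $(S^*S+T+1)^{1/2}$ has the same domain, and its inverse $(S^*S+T+1)^{-1/2}$ (bounded, since $S^*S+T+1\ge 1$) has range exactly $\dom((S^*S+T+1)^{1/2})=\dom(S)$. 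I would phrase the form inequalities for $\eta$ in this common form domain and then extend by density/continuity to all $\xi\in\mathscr H_1$, which completes the proof.
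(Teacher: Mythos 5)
Your proof is correct, and it takes a genuinely different and in fact more elementary route than the paper's. The paper factors the target operator as a product of two bounded operators,
\[
S(S^*S+T+1)^{-1/2} = \bigl[S(S^*S+1)^{-1/2}\bigr]\bigl[(S^*S+1)^{1/2}(S^*S+T+1)^{-1/2}\bigr]\,,
\]
and bounds each factor separately: the first has norm $\leq 1$ via the polar decomposition $S = U(S^*S)^{1/2}$, and the second has norm $\leq\sqrt{\lambda+1}$ by comparing $S^*S+1$ with $S^*S+T+\lambda+1$ and applying the spectral calculus of $S^*S+T$. You instead work entirely at the level of quadratic forms, writing $\langle S^*S\,\eta,\eta\rangle = \langle(S^*S+T+1)\eta,\eta\rangle - \langle(T+1)\eta,\eta\rangle$ and bounding the subtracted term by $\lambda\|\eta\|^2 \leq \lambda\langle(S^*S+T+1)\eta,\eta\rangle$ using the two hypotheses $T\geq-\lambda$ and $S^*S+T\geq 0$. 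This dispenses with the polar decomposition and the intermediate comparison operator $(S^*S+1)^{1/2}(S^*S+T+1)^{-1/2}$ altogether, so it is shorter and uses less machinery; the paper's factorization, on the other hand, cleanly separates the $T$-dependence (second factor) from the structural fact $\|S(S^*S+1)^{-1/2}\|\leq 1$ (first factor), which some readers may find more transparent. Your domain discussion at the end --- that adding the bounded $T$ does not change the form domain, and that $(S^*S+T+1)^{-1/2}$ maps $\mathscr H_1$ onto $\dom((S^*S+T+1)^{1/2})=\dom(S)$ --- is exactly the point that needs care and you handle it correctly.
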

\begin{proof} 
	By assumption we have
	\[
		S^*S + 1 \leq S^*S+T+\lambda + 1\,,
	\]
	which means
	\[
		\|(S^*S + 1)^{1/2}f\| \leq \|(S^*S+T+\lambda+1)^{1/2}f\| \quad \text{ for all } \quad f \in \dom(S^*S)^{1/2}\,.
	\]
	From this we obtain
	\begin{align*}
		\|(S^*S + 1)^{1/2}(S^*S+T+1)^{-1/2}h\| &\leq \|(S^*S+T+\lambda+1)^{1/2}(S^*S+T+1)^{-1/2}h\|
	\end{align*}
	for all $h\in\mathscr H_1$. Using the functional calculus associated with the operator $S^*S+T$, we calculate the norm of the operator appearing on the right hand side to be
	\begin{align*}
		\|(S^*S+T+\lambda+1)^{1/2}(S^*S+T+1)^{-1/2}\|\leq \sup_{t\geq 0}\sqrt{\frac{t+\lambda +1}{t+1}} = \sqrt{\lambda + 1}\,,
	\end{align*}
	which implies
	\[
		\|(S^*S + 1)^{1/2}(S^*S+T+1)^{-1/2}\|\leq \sqrt{\lambda + 1}\,.
	\]
	Now we can estimate
	\begin{align*}
		\|S(S^*S + T + 1)^{-1/2}\| & = \| S(S^*S+1)^{-1/2}(S^*S+1)^{1/2}(S^*S + T + 1)^{-1/2}\|\\
		& \leq \sqrt{\lambda + 1}\|S(S^*S+1)^{-1/2}\|\\
		& \leq \sqrt{\lambda + 1}\|(S^*S)^{1/2}(S^*S+1)^{-1/2}\|\\
		&\leq \sqrt{\lambda + 1} \sup_{t\geq 0} \sqrt{\frac{t}{t+1}}\\
		&\leq \sqrt{\lambda + 1}\,,
	\end{align*}
	where we have used the polar decomposition $S = U(S^*S)^{1/2}$ with a partial isometry $U$ on the third line and the functional calculus associated with the operator $S^*S$ on the fourth line.
\end{proof}

Using this lemma, we are going to prove that one has (H2): first of all, note that $Q_t$ acting on $\Gamma_{C^{\infty}}(X,\Sigma_{g_0})$ is a first order differential operator whose coefficients depend smoothly on $t\in[0,1]$. Since $X$ is compact, it follows that
\begin{align*}
 \left\langle \dot{Q}_t\varphi,\psi \right\rangle = (d/dt) \left\langle Q_t\varphi,\psi \right\rangle = (d/dt) \left\langle \varphi,Q_t\psi \right\rangle =  \left\langle \varphi,\dot{Q}_t\psi  \right\rangle
\end{align*}
for all $\varphi,\psi\in\Gamma_{C^\infty}(X,\Sigma_{g_0})$, i.e., $\dot{Q}_t$ is symmetric.

Secondly, the operator $Q_t^2+1$ being elliptic, it follows from a classical result of Seeley \cite{see} that $(Q_t^2+1)^{-1/2}$ is a pseudo-differential operator. In particular, it maps $\Gamma_{C^{\infty}}(X,\Sigma_{g_0})$ to itself.

Turning to operator norms, note that $\dot{Q}_t(Q_t^2+1)^{-1/2}$ is bounded if and only if 
 \[
	\sup \left\{  \left| \left\langle \dot{Q}_t(Q_t^2+1)^{-1/2}\varphi,\varphi \right\rangle\right|\,:\,\varphi\in\Gamma_{C^\infty}(X,\Sigma_{g_0})\right\} <\infty\,.
\]
The operators $\dot{Q}_t$ and $(Q_t^2+1)^{-1/2}$ being symmetric this, in turn, is equivalent to $(Q_t^2+1)^{-1/2}\dot{Q}_t$ being bounded. Hence, it suffices to show that
\begin{align}\label{eqn:0}
	\sup_{t\in [0,1]}\left\|\dot{Q}_t(Q_t^2+1)^{-1/2}\right\|<\infty\,.
\end{align}

To this end, we first use the unitary invariance of the functional calculus to compute
\begin{align*}
	\left\|\dot{Q}_t(Q_t^2+1)^{-1/2}\right\| & = \left\|\dot{Q}_t(\left(U_tD_{g_t}U_t^*\right)^2+1)^{-1/2}\right\| = \left\|\dot{Q}_tU_t(D_{g_t}^2+1)^{-1/2}U_t^*\right\|\\
	&= \left\|U_t^*\dot{Q}_tU_t(D_{g_t}^2+1)^{-1/2}\right\|\,.
\end{align*}
Next, we decompose 
$$
U_t^*\dot{Q}_tU_t = a_t\circ \nabla_t + \tau_t,
$$
with $\nabla_t$ the spinor connection of $\Sigma_{g_t}$, and
$$
a_t\in\Gamma_{C^\infty}(X,\Hom(T^*X\otimes \Sigma_{g_t},\Sigma_{g_t})),\quad\tau_t\in \Gamma_{C^{\infty}}(X,\End(\Sigma_{g_t})),
$$ 
so that by the Lichnerowicz formula we have
\begin{align}\label{eqn:1}
	U_t^*\dot{Q}_tU_t(D_{g_t}^2+1)^{-1/2} = a_t \nabla \left(\nabla^*\nabla + \tfrac 14 \mathrm{scal}_{g_t} + 1\right)^{-1/2} + \tau_t \left(D_{g_t}^2 + 1\right)^{-1/2}\,.
\end{align}
Because $\|(D_{g_t}^2 + 1)^{-1/2}\|\leq 1$, the operator norm of the second term on the right hand side is bounded by $\|\tau_t\|$, which is continuous in $t$. Hence,
\[
	\sup_{t\in [0,1]} \|\tau_t \left(D_{g_t}^2 + 1\right)^{-1/2}\|<\infty\,.
\]
Regarding the first term on the right hand side of \eqref{eqn:1}, we appeal to the above lemma with 
$$
S=\nabla,\quad T=(1/4)\mathrm{scal}_{g_t},\quad \lambda_t:= (1/4)\max_{x\in X}|\mathrm{scal}_{g_t}(x)|, 
$$
to see that
\[
	\|a_t \nabla \left(\nabla^*\nabla + \tfrac 14 \mathrm{scal}_{g_t} + 1\right)^{-1/2}\|\leq \|a_t\| \sqrt{\lambda_t+1}\,,
\]
which is also continuous in $t$, thereby completing the proof of \eqref{eqn:0}. The remaining condition (H3) is evident from the last two formulae in Remark~\ref{rem:explicit-formulae-C}. Note also that for each $\theta \in \Omega_{\T}(X\times \T)$ and $t\in[0,1]$, \[\text{the operators}\qquad ((Q^{g_\bullet}_t)^2+1)^{\pm 1/2}\,\left(\d c^{g_\bullet}_t/\d t (\theta)\right)\,((Q^{g_\bullet}_t)^2+1)^{\mp 1/2}\] are densely defined and bounded. This completes the proof of Theorem~\ref{main}.

\section*{Appendix: formal proof of formula (\ref{theoformel})}

We start by calculating the derivative of $\mathfrak I^{g_t}$ w.r.t.\ $t$,
\begin{align*}
	(d/dt)\mathfrak{I}^{g_t}[\sigma] &= \int_{LX} (d/dt) e^{-E^{g_t}-\omega^{g_t}}\wedge \sigma = \int_{LX}  e^{-E^{g_t}-\omega^{g_t}}\wedge(d/dt)\left(-E^{g_t}-\omega^{g_t}\right)\wedge \sigma\,.
\end{align*}
Let $\nabla(t)$ denote the Levi-Civita connection for $g_t$, and let $\gamma\in LX$, $Y,Z\in T_{\gamma}LX$. Recalling the definition of the energy functional and the presymplectic form \eqref{eqn:energy-presymplectic-form}, the $t$-derivative appearing in the integrand on the right-hand side is
\begin{align}\label{eqn:formal_arg0}
	(d/dt)\left(-E^{g_t}_\gamma-\omega^{g_t}_\gamma\right)(Y,Z) = -\frac{1}{2}\int_\T g'_t(\dot{\gamma},\dot{\gamma}) - \int_\T g'_t\left(Y,\tfrac{\nabla(t)}{\d s} Z\right) - \int_\T g_t\left(Y,\tfrac{\nabla(t)'}{\d s}Z\right)\,,
\end{align}
where we have used primes to denote derivatives w.r.t.\ $t$ and dots to denote derivatives w.r.t.\ the loop parameter $s$.

Using that the covariant derivative commutes with every contraction, the second integral in \eqref{eqn:formal_arg0} is equal to
\begin{multline*}
	\frac 12\int_\T  g'_t\left(Y,\tfrac{\nabla(t)}{\d s}Z\right) + \frac 12 \int_\T \left\{\dot{\gamma} g'_t(Y,Z) - \tfrac{\nabla(t)}{\d s}(g'_t(Y,\,\cdot\,))(Z)\right\}\\
	=\frac 12\int_\T  g'_t\left(Y,\tfrac{\nabla(t)}{\d s}Z\right) - \frac 12 \int_\T \tfrac{\nabla(t)}{\d s}(g'_t(Y,\,\cdot\,))(Z)\\
	=\frac 12\int_\T  \left\{g_t'\left(Y,\tfrac{\nabla(t)}{\d s}Z\right) - g'_t\left(Z,\tfrac{\nabla(t)}{\d s}Y\right)\right\} - \frac 12 \int_\T(\nabla(t)_{\dot{\gamma}}g'_t)(Y,Z)\,.
\end{multline*}

For the third term on the right-hand side of \eqref{eqn:formal_arg0}, we use the well-known formula (see, e.g., \cite[Proposition~2.3.1]{top}) for the time derivative of the Levi-Civita connection,
\begin{align*}
	\int_\T g_t\left(Y,\tfrac{\nabla(t)'}{\d s}Z\right) = \frac 12 \int_\T\left\{ (\nabla(t)_Z g'(t))(Y,\dot{\gamma}) + (\nabla(t)_{\dot{\gamma}} g'_t)(Y,Z) - (\nabla(t)_Y g'_t)(Z,\dot{\gamma}) \right\}\,.
\end{align*}

Putting the above together, we obtain
\begin{multline}\label{eqn:formal_arg1}
	(d/dt)\left(-E^{g_t}_\gamma-\omega^{g_t}_\gamma\right)(Y,Z) = -\frac{1}{2}\int_\T g'_t(\dot{\gamma},\dot{\gamma}) - \frac 12 \int_{\T}\left\{g'_t\left(Y,\tfrac{\nabla(t)}{\d s}Z\right) - g'_t\left(Z,\tfrac{\nabla(t)}{\d s}Y\right)\right\}\\
	+ \frac 12 \int_{\T}\left\{ (\nabla(t)_Yg'_t)(\dot{\gamma},Z)  - (\nabla(t)_Zg'_t)(\dot{\gamma},Y) \right\}\,.
\end{multline}

On the other hand, defining the 1-form $\beta^{g_\bullet}_t$ on $LX$ by
\begin{align*}
	(\beta^{g_\bullet}_t)_\gamma(Y) = \frac 12 \int_\T g'_t(\dot{\gamma}, Y)\,,
\end{align*}
its exterior derivative $\d\beta^{g_\bullet}_t$ is defined by the Cartan formula \cite[33.12]{krmi},
\begin{align}\label{eqn:d-sigma}
	\d(\beta^{g_\bullet}_t)_\gamma(Y,Z) = Y\beta^{g_\bullet}_t(\tilde{Z}) - Z\beta^{g_\bullet}_t(\tilde{Y}) - \beta^{g_\bullet}_t([\tilde{Y},\tilde{Z}])\,,
\end{align}
where $\tilde{Y}$ and $\tilde{Z}$ are local extensions of $Y,Z$, i.e., vector fields defined on a neighborhood of $\gamma\in LX$ with $\tilde{Y}_\gamma = Y$ and $\tilde{Z}_\gamma = Z$ (this definition is independent of the extensions $\tilde{Y}, \tilde{Z}$), and where we have used the usual identification of tangent vectors with the derivations they induce on the algebra of smooth functions on $LX$.

To compute the right hand side of \eqref{eqn:d-sigma}, fix $\gamma\in LX$ and let $\eta,\xi:(-\varepsilon,\varepsilon)\to LX$ be smooth with $\eta(0)=\xi(0)=\gamma$ and $\dot{\eta}(0)=Y$, $\dot{\xi}(0)=Z$. Then
\begin{multline*}
	Y\beta^{g_\bullet}_t(\tilde{Z}) = \frac 12\frac{\d}{\d \tau}_{|\tau = 0}  \int_\T (g'_t)_{\eta(\tau)(s)}\left(\tfrac{\partial}{\partial s}\eta(\tau)(s), \tilde{Z}_{\eta(\tau)}(s)\right)\d s \\
	= \frac 12\int_\T\left\{(\nabla(t)_{Y(s)}g_t')(\dot{\gamma}(s),Z(s))+g_t'\left(\tfrac{\nabla(t)}{\partial \tau}\tfrac{\partial}{\partial s}\eta(\tau)(s),Z(s)\right) + g_t'\left(\dot{\gamma}(s),\tfrac{\nabla(t)}{\d \tau}\tilde{Z}_{\eta(\tau)}(s)  \right) \right\}_{|\tau=0}\mkern-10mu\d s\\
	= \frac 12\int_\T\left\{(\nabla(t)_{Y(s)}g_t')(\dot{\gamma}(s),Z(s))+g_t'\left(\tfrac{\nabla(t)}{\d s}Y(s),Z(s)\right) + g_t'\left(\dot{\gamma}(s),\tfrac{\nabla(t)}{\d \tau}\tilde{Z}_{\eta(\tau)}(s)  \right) \right\}_{|\tau=0}\mkern-10mu\d s\,,
\end{multline*}
where the last equality comes from the well-known identity
\begin{align*}
	\frac{\nabla(t)}{\partial \tau}\frac{\partial}{\partial s}\eta(\tau)(s)=\frac{\nabla(t)}{\partial s}\frac{\partial}{\partial \tau}\eta(\tau)(s)\,.
\end{align*}
Analogously, we have 
\begin{multline*}
	Z\beta^{g_\bullet}\beta_t(\tilde{Y}) = \\
	= \frac 12\int_\T\left\{(\nabla(t)_{Z(s)}g_t')(\dot{\gamma}(s),Y(s))+g_t'\left(\tfrac{\nabla(t)}{\d s}Z(s),Y(s)\right) + g_t'\left(\dot{\gamma}(s),\tfrac{\nabla(t)}{\d \tau}\tilde{Y}_{\xi(\tau)}(s)  \right) \right\}_{|\tau=0}\mkern-10mu\d s\,.
\end{multline*}
To calculate $[\tilde{Y},\tilde{Z}]_\gamma(s)$, we use that the space of smooth vector fields on $LX$ forms a Lie subalgebra of the space of bounded derivations \cite[Theorem~32.8]{krmi}. To this end, fix $s\in\T$, let $f\in C^\infty(X)$, denote by $\mathrm{ev}_s:LX\to X$ the smooth evaluation map $\gamma\mapsto \gamma(s)$, and define $\tilde{f}:=f\circ\mathrm{ev}_s\in C^\infty(LX)$. Then  
\begin{align*}
	\tilde{Z}_\gamma\tilde{f} = \d \tilde{f}_\gamma \tilde{Z}_\gamma = \d f_{\gamma(s)}\d(\mathrm{ev}_s)_\gamma \tilde{Z}_\gamma = \d f_{\gamma(s)}\tilde{Z}_\gamma(s)\,,
\end{align*}
so that
\begin{align*}
	\tilde{Y}_\gamma (\tilde{Z}\tilde{f}) = \frac{\d}{\d\tau}_{|\tau=0} \d f_{\eta(\tau)(s)} \tilde{Z}_{\eta(\tau)}(s) = (\nabla(t)_{Y(s)}\d f)(Z(s)) + \d f\tfrac{\nabla(t)}{\d \tau}\tilde{Z}_{\eta(\tau)}(s)\,,
\end{align*}
showing
\begin{align*}
	[\tilde{Y},\tilde{Z}]_\gamma(s)f = [\tilde{Y},\tilde{Z}]_\gamma\tilde{f} &= \operatorname{Hess}f(Y(s),Z(s)) + \d f \tfrac{\nabla(t)}{\d \tau}\tilde{Z}_{\eta(\tau)}(s)_{|\tau=0} \\&-\operatorname{Hess}f(Z(s),Y(s)) - \d f \tfrac{\nabla(t)}{\d \tau}\tilde{Y}_{\xi(\tau)}(s)_{|\tau=0}\\
	&= \left( \tfrac{\nabla(t)}{\d \tau}\tilde{Z}_{\eta(\tau)}(s)_{|\tau=0}- \tfrac{\nabla(t)}{\d \tau}\tilde{Y}_{\xi(\tau)}(s)_{|\tau=0}\right)f\,.
\end{align*}

We have proved 
\begin{align*}
	\d(\beta^{g_\bullet}_t)_\gamma(Y,Z) = (d/dt)\left(-E^{g_t}_\gamma-\omega^{g_t}_\gamma\right)(Y,Z) + \iota\beta^{g_\bullet}_t\,.
\end{align*}
Hence, for any differential form $\sigma$ on $LX$ we have
\begin{align*}
	(d/dt)\mathfrak{I}^{g_t}[\sigma] = \int_{LX}  e^{-E^{g_t}-\omega^{g_t}}\wedge (\d-\iota)\beta^{g_\bullet}_t \wedge \sigma = \int_{LX}  e^{-E^{g_t}-\omega^{g_t}}\wedge \beta^{g_\bullet}_t \wedge (\d-\iota)\sigma\,,
\end{align*}
where the last equality follows from the fact that by definition one has
$$
(\d-\iota)\mathfrak I^{g_t}[\sigma] = \mathfrak I^{g_t}[(\d-\iota)\sigma] = 0.
$$
Defining
\begin{equation*}
	\mathfrak C^{g_\bullet}_t(\sigma): = \int_{LX}\! e^{-E^{g_t}-\omega^{g_t}}\wedge\beta^{g_\bullet}_t\wedge \sigma\,,
\end{equation*}
we end up with
\[
(d/dt) \mathfrak I^{g_t} = (\d-\iota)\mathfrak C^{g_\bullet}_t,
\]
formally proving (\ref{theoformel}).

\vspace{5mm}

\textbf{Acknowledgements:} The authors would like to thank Matthias Ludewig, Jonas Miehe and Konrad Waldorf for very helpful discussions!

%
%
%
%

\end{document}